\numberwithin{equation}{section}
\renewcommand\r{\rangle}
\renewcommand\l{\langle}
\newcommand\dsize{\displaystyle}
\renewcommand\Re{\operatorname{Re}}
\renewcommand\Im{\operatorname{Im}}
\newcommand\cal{\mathcal}
\newcommand\R{\mathbb{R}}
\newcommand\Z{\mathbb{Z}}
\newtheorem{Thm}{Theorem}[section]
\newtheorem{Lemma}[Thm]{Lemma}
\newtheorem{Cor}[Thm]{Corollary}
\theoremstyle{remark}
\renewcommand\hat{\widehat}
\begin{document}

\title[One-parameter groups   and discrete   Hilbert transforms]{One-parameter groups of  operators   and discrete   Hilbert transforms}
\author{Laura De Carli}
\address{L.~De Carli, Florida International University,
Department of Mathematics,
Miami, FL 33199, USA}
\email{decarlil@fiu.edu}
\author{Gohin Shaikh Samad}
\address{ Gohin Shaikh Samad, Univ. Iowa, Department of Mathematics
14 MacLean Hall
Iowa City, IA 52242-1419}
\email{shaikhgohin-samad@uiowa.com}
\subjclass[2010]{
 Primary: 42A45, 42A50
 Secondary classification: 41A44
 }
\keywords{ Discrete Hilbert transform, groups, isometries}
 \maketitle
 \begin{abstract}  We    show that the  discrete  Hilbert  transform   and the discrete  Kak-Hilbert transform 
 are infinitesimal generator of     one-parameter  groups   of operators in $\ell^2$.   
 \end{abstract}

 \section{Introduction}
We are concerned with  the family of operators $\{T_t \}_{t\in\R}$,   initially defined in the space $s_0$ of  complex-valued sequences with compact support   as follows: 
\begin{equation}\label{e1-defT}
(T_t(\vec a))_m= \begin{cases}  \dsize \frac{ \sin ( \pi t)}{ \pi}\sum_{n\in\Z} \frac{a_n  }{m-   n  +t} 
& \mbox{ if $t\not\in\Z$}
\cr  
 \dsize   (-1)^t a_{m+t} & \mbox{if $t \in\Z$}. \end{cases}
\end{equation}
When $t$  is an integer,   $T_t(\vec a)= (-1)^t\tau_t(\vec a)$, where $\tau_k(\vec a)_m= a_{k+m}$ is the translation;
when $t   \in(-1,1)$  these operators    can be viewed as discrete versions of the     Hilbert transform in $L^2(\R)$.

The Hilbert transform  $${\cal H}f(x)= p.v. \frac 1{\pi}\int_{-\infty}^\infty \frac{f(t)}{x-t}dt ,$$ initially defined when  $f\in C^\infty_0(\R)$, is the archetypal  singular integral operator.   
Discrete analogs of the Hilbert transform have important applications in
 science and technology.
The following operator   was    introduced by D. Hilbert in 1909.
 \begin{equation}\label{e1-def-DHT}(H(\vec a))_m= \frac 1 \pi \sum_{n\in\Z\atop n\ne m} \frac{a_n  }{m- {  n}  }.
\end{equation}
This transformation    is   not well suited for  the applications for reasons that we will discuss in Section 2;      
the   operators  $$T_{\frac 12}(\vec a) =\frac 1\pi \sum_{n\in\Z}\frac{a_n}{m-n+\frac 12}$$ (E.C. Titchmarsh, 1926)  and   the {\it Kak-Hilbert transform}  (S. Kak, 1970, \cite{K}) 
\begin{equation}\label{e1-def-Kak}  K(\vec a)(k)= \begin{cases} \dsize \frac 2\pi\sum_{\mbox{\small $n$ even} }\frac{a_n}{k-n} & \mbox{k odd}
\\
\dsize \frac 2\pi\sum_{\mbox{\small $n$ odd}} \frac{a_n}{k-n} & \mbox{k even}
\end{cases} 
\end{equation}
share  some of the features of the continuous Hilbert transform and are very relevant in sciences and   engineering.
 We will discuss    these operators  in Sections 2 and 3.  
Weighted   discrete Hilbert transforms   and their connections with problems   in complex analysis are  discussed in \cite{BMS}.

\medskip
When $1\leq p<\infty$, we denote with $\ell^p$    the space of complex-valued p-summable sequences, i.e., 
$$\ell^p=\left\{\vec a= (a_j)_{j\in\Z} \ : \ ||\vec a||_{\ell^p}=\left(\sum_{j\in\Z} |a_j|^p\right)^{\frac 1p} <\infty\right\}.
$$
$\ell^\infty$ is the space of bounded sequences equipped with the norm $||\vec a||_{\ell^\infty}=\sup_{m\in\Z}|a_m|$.

Our main result  is the following
\begin{Thm}\label{T1-semigr-H}
The family $\{T_t\}_{t\ge 0}$ defined in \eqref{e1-defT} is a strongly continuous   group   of isometries  in $ \ell^2 $;  
   its  infinitesimal generator  is  
$\pi H$, where $H$ is  the operator defined  in 
\eqref{e1-def-DHT}.  
\end{Thm}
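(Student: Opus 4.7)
The natural approach is to diagonalize the family $\{T_t\}$ via the Fourier isomorphism $\mathcal{F}:\ell^2(\Z) \to L^2([0,2\pi))$, $(\mathcal{F}\vec a)(\theta)=\hat a(\theta):=\sum_{m\in\Z}a_m e^{im\theta}$. The key computation is the identity
\[
\frac{1}{2\pi}\int_0^{2\pi}e^{it(\pi-\theta)}e^{-im\theta}\,d\theta \;=\; \frac{\sin(\pi t)}{\pi(m+t)}, \qquad t\notin\Z,
\]
which exhibits the kernel $c_m^{(t)}:=\sin(\pi t)/(\pi(m+t))$ as the Fourier coefficients of the unimodular symbol $m_t(\theta):=e^{it(\pi-\theta)}$, extended $2\pi$-periodically from $[0,2\pi)$. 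Since $(T_t\vec a)_m=\sum_n c_{m-n}^{(t)}a_n$ is a convolution, I would conclude that $\mathcal{F}T_t\mathcal{F}^{-1}$ is the multiplication operator by $m_t$. Because the kernel is not in $\ell^1$, this conclusion should first be verified on the dense subspace $s_0$ (where everything is a finite sum) and then extended by continuity. For integer $t$, the symbol reduces to $(-1)^t e^{-it\theta}$, which is precisely the Fourier multiplier of the translation $(-1)^t\tau_t$, matching the integer case in the definition.

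Once this multiplier representation is in place, the first three assertions of the theorem are immediate. Each $T_t$ is an isometry because $|m_t(\theta)|\equiv 1$; the group law $T_{t+s}=T_tT_s$ follows from $m_{t+s}(\theta)=m_t(\theta)m_s(\theta)$; and the strong continuity $\|T_t\vec a-\vec a\|_{\ell^2}\to 0$ as $t\to 0$ follows from $\|T_t\vec a-\vec a\|_{\ell^2}=\|(m_t-1)\hat a\|_{L^2}$ together with the uniform bound $|m_t(\theta)-1|\le 2$ and pointwise convergence to zero, via dominated convergence. As a bonus, the multiplier formulation automatically extends the family to negative $t$, giving a genuine group.

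For the infinitesimal generator, I would observe that $(m_t(\theta)-1)/t \to i(\pi-\theta)$ pointwise, with $|(m_t(\theta)-1)/t|\le |\pi-\theta|\le\pi$ uniformly in small $t$. Dominated convergence yields $t^{-1}(T_t\vec a-\vec a)\to A\vec a$ in $\ell^2$ for every $\vec a$, where $A$ is the bounded $\ell^2$-operator corresponding under $\mathcal{F}$ to multiplication by $i(\pi-\theta)$. It remains to identify $A$ with $\pi H$: for $\vec a\in s_0$ one has $(\pi H\vec a)_m=\sum_{k\neq 0}k^{-1}a_{m-k}$, whose Fourier transform is formally $\bigl(\sum_{k\neq 0}k^{-1}e^{ik\theta}\bigr)\hat a(\theta)$, and the classical sawtooth identity $\sum_{k\neq 0}k^{-1}e^{ik\theta}=i(\pi-\theta)$ on $(0,2\pi)$ gives $\widehat{\pi H\vec a}=i(\pi-\theta)\hat a$, so $A$ and $\pi H$ agree on the dense set $s_0$ and thus everywhere. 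The main obstacle is technical: both the kernel $c^{(t)}$ and the sawtooth series $\sum_{k\neq 0}k^{-1}e^{ik\theta}$ are only conditionally convergent, so each identification of an operator with its Fourier multiplier must be justified by a careful approximation argument (for example via symmetric partial sums converging in $L^2$), rather than a naive term-by-term Fubini interchange.
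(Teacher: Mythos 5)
Your argument is correct, but it takes a genuinely different route from the paper. You diagonalize the whole family at once: the computation $\frac{1}{2\pi}\int_0^{2\pi}e^{it(\pi-\theta)}e^{-im\theta}\,d\theta=\frac{\sin(\pi t)}{\pi(m+t)}$ is right, so under $\mathcal F$ each $T_t$ becomes multiplication by the unimodular symbol $e^{it(\pi-\theta)}$ on $[0,2\pi)$, and the group law, unitarity, strong continuity, and the identification of the generator with $\pi H$ (via the sawtooth series $\sum_{k\ne0}k^{-1}e^{ik\theta}=i(\pi-\theta)$) all fall out of the single identity $e^{it(\pi-\theta)}=e^{t\cdot i(\pi-\theta)}$. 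The paper instead works entirely on the sequence side: the composition law $T_d\circ T_s=T_{s+d}$ is proved by interchanging sums and invoking Euler's partial-fraction expansion of the cotangent \eqref{e2-cotangent} and the derived identity \eqref{e2-sum-double-cot}; the isometry property comes from computing the adjoint $T_t^*=T_{-t}$ and combining it with the group law; and both the strong continuity and the generator statement rest on a Young-inequality estimate for the $\ell^1$ norm of the difference kernel $\bigl(\tfrac{1}{n(n+d)}\bigr)_{n\ne0}$, again evaluated by \eqref{e2-cotangent}. Your approach is more conceptual and unifies the four sub-theorems (Theorems \ref{T4- composition-TdTs}, \ref{T4-convergence}, \ref{T4-norm-Tt}, \ref{T4-inf-gen-H}) into one symbol calculation; it is essentially the multiplier picture the paper itself alludes to in the introduction when citing \cite{L}, and the footnoted remark about Toeplitz operators. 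The paper's route buys elementarity (only classical identities, no Fourier series machinery) and explicit quantitative byproducts such as the bound on $|||H_d|||_{\ell^2}$. The one place where your outline needs the care you already flag is the identification of $T_t$, as defined pointwise by the conditionally convergent formula, with the unitary multiplier operator: the clean way is to check equality on the basis vectors $\delta_n$ (where the claim is exactly that the $c^{(t)}_m$ are the Fourier coefficients of $e^{it(\pi-\theta)}$) and then note that for arbitrary $\vec a\in\ell^2$ the defining series converges absolutely by Cauchy--Schwarz and agrees with $\langle\vec a,\,T_{-t}\delta_m\rangle$; with that detail supplied, the proof is complete.
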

To prove  Theorem \ref{T1-semigr-H}    we will prove   that  $ T_s\circ T_t=T_{s+t};
$
(Theorem \ref{T4- composition-TdTs}); that  $T_t$ is an isometry for every $t\in\R$ (Theorem \ref{T4-norm-Tt}); that for every $ \vec a\in \ell^2$,  the application  $t\to  T_t(\vec a)$   is continuous in $\R$ (Theorem \ref{T4-convergence});    %
 and finally  that, for every $\vec a\in \ell^2$,   
$
\lim_{t\to 0} \frac{T_t(\vec a)- \vec a}{t}=\pi H(\vec a) 
$
(Theorem \ref{T4-inf-gen-H}). 
 
The   proofs  of these results  are   elementary and use  only the identities in Section 2.3. 
Theorem \ref{T4-norm-Tt} seem to be known, but we could not find references in the literature.
Some of the results in Section 4   can   also be  proved   in the framework of the theory of  Toeplitz operators \footnote{We are indebted to   I. Verbitsky for this remark}.

 \medskip
In Section 3 we describe  the properties of the Kak-Hilbert  transform and 
we prove the following
\begin{Thm}\label{T1-semigr-K} 
 Let $K$ be the discrete Kak-Hilbert  transform \eqref{e1-def-Kak}.  Then
$$ U_t= \cos t I+\sin   t K  =\Im(e^{-it}(I+iK)), \quad \quad t\in\R.
$$  
is a strongly continuous    group of operators in $\ell^2$ generated by $K$.
\end{Thm}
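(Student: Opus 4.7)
The plan is to verify the four defining properties of a strongly continuous one-parameter group of operators on $\ell^2$ with infinitesimal generator $K$: boundedness of each $U_t$, the group law $U_{s+t} = U_s U_t$, strong continuity at $0$, and differentiability at $0$ with derivative $K$. The crux is the single identity $K^2 = -I$ on $\ell^2$, after which everything reduces to the sine/cosine addition formulas.

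To establish $K^2 = -I$, I would split each $\vec a\in\ell^2$ into its even- and odd-indexed subsequences $\vec b^{\,e} = (a_{2j})_{j\in\Z}$ and $\vec b^{\,o} = (a_{2j+1})_{j\in\Z}$, and check directly from \eqref{e1-def-Kak} that
\[
(K\vec a)_{2m+1} = (T_{1/2}\,\vec b^{\,e})_m, \qquad (K\vec a)_{2m} = -(T_{-1/2}\,\vec b^{\,o})_m,
\]
using the algebraic identity $2/(2k\pm 1) = 1/(k\pm 1/2)$ and the values $\sin(\pm\pi/2) = \pm 1$ in the prefactor of \eqref{e1-defT}. Applying $K$ a second time and invoking the composition law $T_{1/2}\circ T_{-1/2} = T_0 = I$ from Theorem \ref{T4- composition-TdTs} then yields $K^2\vec a = -\vec a$ at once. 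Boundedness of $K$ on $\ell^2$ (in fact, that $K$ is an $\ell^2$-isometry) also falls out of the same decomposition via Theorem \ref{T4-norm-Tt}, since $\vec a\mapsto(\vec b^{\,e},\vec b^{\,o})$ is itself a unitary equivalence of $\ell^2$ with $\ell^2\oplus\ell^2$.

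With $K^2 = -I$ in hand, the remaining verifications are routine. The group law unfolds as
\[
U_s U_t = \cos s\cos t\,I + (\sin s\cos t + \cos s\sin t)K + \sin s\sin t\,K^2 = \cos(s+t)\,I + \sin(s+t)\,K = U_{s+t},
\]
with $U_0 = I$ automatic, so each $U_t$ is invertible with $U_t^{-1} = U_{-t}$. Strong continuity and the identification of the generator follow from the elementary bounds
\[
\|U_t\vec a - \vec a\| \le |\cos t - 1|\,\|\vec a\| + |\sin t|\,\|K\vec a\|,
\]
\[
\left\|\frac{U_t\vec a - \vec a}{t} - K\vec a\right\| \le \left|\frac{\cos t - 1}{t}\right|\|\vec a\| + \left|\frac{\sin t}{t} - 1\right|\|K\vec a\|,
\]
both of which tend to $0$ as $t\to 0$ for every $\vec a\in\ell^2$, since $K\vec a$ lies in $\ell^2$.

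The only real obstacle is the identity $K^2 = -I$. Once the parity decomposition is introduced and the Kak kernel $2/(k-n)$ is rewritten with half-integer denominators, the identity becomes a repackaging of the composition law for the family $\{T_t\}$. Everything else in Theorem \ref{T1-semigr-K} is a one-line consequence of trigonometry together with the $\ell^2$-boundedness of $K$.
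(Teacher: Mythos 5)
Your proposal is correct, and it takes a noticeably different route from the paper. Both arguments pivot on the single identity $K\circ K=-I$, but you treat it differently: the paper simply cites Kak's result from \cite{K} for this identity, defines $U_t:=e^{tK}=\sum_{n\ge 0}\frac{t^n}{n!}K^{(n)}$ (legitimate because $K$ is bounded on $\ell^2$), and sums the series into $\cos t\, I+\sin t\, K$; the group law, strong continuity, and the identification of the generator are then inherited from the general theory of exponentials of bounded operators and are not verified explicitly. You instead start from the closed form $\cos t\,I+\sin t\,K$ and check the group axioms, strong continuity, and the derivative at $t=0$ by hand via the addition formulas and the elementary estimates you wrote down -- all of which are correct since $K$ is bounded. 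More substantively, you supply a proof of $K^2=-I$ by the parity decomposition $(K\vec a)_{2m+1}=(T_{1/2}\vec b^{\,e})_m$, $(K\vec a)_{2m}=-(T_{-1/2}\vec b^{\,o})_m$ together with $T_{1/2}\circ T_{-1/2}=T_0=I$ from Theorem \ref{T4- composition-TdTs}; I checked these index computations and they are right. This is a genuine addition: the paper records the closely related identities \eqref{e3-K and T1}--\eqref{e3-K and T2} but uses them only for the $\ell^p$ norm comparisons of Section 3, never to derive $K^2=-I$. The paper's proof buys brevity at the cost of an external citation and an appeal to semigroup theory; yours buys a self-contained, elementary argument that also ties Theorem \ref{T1-semigr-K} back to the composition law for the family $\{T_t\}$. (One cosmetic point on the statement itself, not your proof: $\Im\bigl(e^{-it}(I+iK)\bigr)$ formally yields $-\sin t\, I+\cos t\, K$, so the intended expression is presumably $\Re\bigl(e^{-it}(I+iK)\bigr)$.)
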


In Sections   2.1  and   3 we  discuss  the  $\ell^p-\ell^p$ boundness of the operators $H$, $T_t$ and $K$  for $1<p<\infty$.
It is noted in \cite{L}     that    the operators $T_t$  and $H$  (and in general,    every operator  $L:\ell^p\to\ell^p$ in the form of  $L(\vec a)_m=\sum_n a_{m-n} c_n$, with $(c_n)_{n\in\Z}\in \ell^\infty$)    can   be associated to  a Fourier multiplier operator   acting on functions on the real line.  Indeed, we can associate   to $L$   the operator $\tilde L : L^p(\R)\to L^p(\R)$
$$
\tilde L f(x)=  \sum_{n\in\Z} f(x-n)c_n=\int_\R \hat f(y) m (y)e^{2\pi i xy}dy
$$
where  $m (y)$ is the periodic function  on $\R$ whose Fourier  
coefficients are the $c_n$'s, and $\hat f(y)=\int_\R f(x)e^{2\pi i xy}dx$ is the Fourier transform of $f(x)$. 
For example, it is not too difficult to verify that the multiplier associated to the Kak-Hilbert transform \eqref{e1-def-Kak} is the "square wave"  function that coincides with 
 $m(x)=i\ \mbox{sgn}(x)
 $ 
 in $(-\frac 12, \frac 12)$.
 
It is proved in \cite{L} that  the $\ell^p\to \ell^p$ operator norm  of   $L$  is the same as the $L^p(\R) \to L^p(\R) $ operator norm of $\tilde L$.  In short, \begin{equation}\label{e1-equiv-p-norms}|||L|||_{ \ell^p }= |||\tilde L|||_{L^p }  .\end{equation}  Since the $L^2(\R)\to L^2(\R)$  norm  of a multiplier operator is the $L^\infty(\R)$ norm of the multiplier (see e.g. \cite{SW}) from \eqref{e1-equiv-p-norms} follows that
$ 
|||L|||_{\ell^2}=  \sup_{x\in\R}|m(x)|.
$
The evaluation of the $L^p(\R)\to L^p(\R)$ norm of multiplier operators is  often a very difficult problem, but the equivalence 
   \eqref{e1-equiv-p-norms} can  be used to produce an upper bound for the $\ell^p\to\ell^p$ norm of $L$. 
  Indeed, we will  prove in Section 3  (Theorems \ref{T3-p-norm-tildeK} and \ref{T3-p-norm-K})  that $|||{\cal H}|||_{L^p}\leq |||H|||_{\ell^p}  \leq |||K|||_{\ell^p}=|||T_{\frac 12}|||_{\ell^p}$.  We conjecture that these norms are equal for all values of $p\in (1, \infty)$.

\medskip

 {\bf Acknowledgement}. The first author wishes to 
  thank   L. Grafakos, E. Laeng  and S. Montgomery-Smith for useful discussions during her visit at the University of  Missouri.

\medskip

\section{Preliminaries}

\subsection{The   Hilbert transform}
The Hilbert transform  $${\cal H}f(x)= p.v. \frac 1{\pi}\int_{-\infty}^\infty \frac{f(t)}{x-t}dt,$$ initially defined when  $f\in C^\infty_0(\R)$, is an important  singular integral operator. We  refer the reader to the excellent   \cite{GrBook} for an introduction to the Hilbert transform and singular integrals.
 
The Hilbert transform satisfies the identity  ${\cal H\circ \cal H}( f)= -f$,  which implies that ${\cal H}$   is an isometry in  $L^2(\R)$.

When $f$ is real-valued,   $ f+i{\cal H}f$ extends to an holomorphic function in the upper  complex   half-plane.
This fundamental property of the Hilbert transform  has been used by S. Pichoridis \cite {P}  to evaluate the best constant in the inequality of M. Riesz:
\begin{equation}\label{e2-Lp-norm-Hilbert}
||{\cal H}f||_{L^p(\R )}\leq n_p||f||_{L^p(\R )}, \quad f\in C^\infty_0(\R).
\end{equation}
Here, $1<p<\infty$, and   $n_p=\max\{\tan(\pi/2p), \ \cot(\pi/2p)\}$. 
See also  \cite{G2} for a  short proof of Pichoridis' result.

\medskip
Discrete versions of the   Hilbert transform  have   a variety of applications in signal
representation and processing.  See e.g. \cite{SS} and the references cited there.    

To the best of our knowledge, the $\ell^2\to\ell^2$ norm of the operator $H $     defined in \eqref{e1-def-DHT}  has been estimated for the  first time by D. Hilbert 
who in 1909  proved  the inequality 
\begin{equation}\label{e1-ell2-norm-DHT}   \sum_{m\in\Z} \sum_{n\in\Z\atop n\ne m} \frac{a_n  b_m}{m- {  n}  } \leq c\left(\sum_{n\in\Z}|a_n|^2\right)^{\frac 12}\left(\sum_{m\in\Z}|b_m|^2\right)^{\frac 12}
\end{equation}
with a constant $c>\pi$. \footnote{The original proof  first appeared in Weyl's \cite{W} doctoral dissertation in 1908.} Three years later, Shur \cite{S}  proved that $c=\pi$ is the  best possible constant in the inequality \eqref{e1-ell2-norm-DHT}, or equivalently  that   $1$ is the $\ell^2\to\ell^2$ operator norm of $H$.
See \cite{G} for  an   elegant  elementary proof of Shur's inequality.

$H$  is not an isometry in  $\ell^2$.   Indeed,  the proof in  \cite{G}   shows  that   the equality $||H(\vec a)||_ {\ell^2}=||\vec a||_ {\ell^2}$ only holds  when $\vec a=0$.    Also, it is not true in general that $H\circ H (\vec a)  =-\vec a$.

\medskip
The operators $T_{\frac 12}$  is  a good  analog of the continuous Hilbert transform.  By Theorem \ref{T1-semigr-H}, $T_{\frac 12} $    is an isometry in $\ell^2$ and  satisfies $T_{\frac 12}\circ T_{\frac  12}(\vec a) =-\tau_{1}(\vec a)$.

\medskip
The     Kak-Hilbert transform  defined in \eqref{e1-def-Kak}    can be viewed as a "reduced"   discrete Hilbert transform    \eqref{e1-def-DHT}: 
if   let $\chi_e:\ell^2\to \ell^2$  
 be such that  $\chi_e(\vec a)_n = a_n$ when $n$ is even and $\chi_e(\vec a)_n=0$ when $n$ is odd, and  we let  $\chi_o(\vec a)=\vec a-\chi_e(\vec a)$, we can easily verify that 
 \begin{equation}\label{e2-alt-def-K}
K(\vec a)= 2\left(\chi_o\circ H\circ \chi_e(\vec a)+\chi_e\circ H\circ \chi_o(\vec a)\right).
\end{equation} 
S. Kak proved in \cite{K} that $K\circ K(\vec a)= - \vec a $, from which follows that    $K$ is an isometry in $\ell^2$. 

 It is proved in \cite{HLP}   that $H$ is bounded in $\ell^p$ for $1<p<\infty$, and in   \cite{L}, Theorem 4.3, that the   $\ell^p\to \ell^p$ operator norm    of $H$    is  $\ge   n_p$, where  $n_p $ is the constant  in \eqref{e2-Lp-norm-Hilbert}.    Equality is proved   for  special values of $p$.  

 It is  also proved in \cite{L} that   the operators $T_t$ are bounded in $\ell^p$ for $1<p<\infty$,  and 
 $ |||T_t|||_{\ell^p}\ge ||| \cos(\pi t) {\cal I} + \sin(\pi t) {\cal H}|||_{L^p}$, where ${\cal I}f=f $.
The evaluation of the $\ell^p\to\ell^p$ operator norms of $H$ and $T_t$ is a tantalizing long-standing open problem. 
 %

\subsection{Groups and semigroups of operators}    Let $ X $ be a Banach space with norm $||\ ||$ and let ${\cal L}(X)$ be the collection of linear and bounded operators on $X$.
A {\it one parameter group of operators} is a mapping  $U:\R \to {\mathcal L} (X)$  such that 
(a)  $U(0)$ is the identity operator in ${\mathcal L} (X)$  and  (b)   $U(s)\circ  U(t)=U(s+t)$ whenever $s,\  t\in\R$. In particular  $U(-s)=U^{-1}(s)$.

A {\it  semigroup } is a mapping   $U :[0,\,\infty)\to {\mathcal L} (X)$ that satisfies  (a) and   (b)  whenever $s,\ t\ge 0$. 

We say that  a group (or semigroup)  $U$ is {\it strongly continuous}  if $\lim_{t\to t_0} ||U(t)(x)- U(t_0)(x)||=0$   for every $x\in X$. When $U$ is a semigroup, we also require that  
 $\lim_{t\to 0+} ||U(t)(x)-  x||=0$.
 We say that $U$ is {\it contractive} if $||U(t)(x)||\leq   ||x||$ for every    $x\in X$ and every $t\in\R$  (or: for every $t\ge 0$ if $U$ is a semigroup).


 The {\it infinitesimal generator} $A$ of a strongly continuous group (or semigroup) $U(t)$ can be introduced as the operator defined by 
\begin{equation}\label{def-inf-gen}
 A(x)=:  \frac {d}{dt}U\vert_{t=0}=  \lim_{h\to 0^+} \frac{U(h)(x)-x}{h}, \qquad x\in  D(A)
 \end{equation}
 where $D(A)$ is the set of all $x\in X$ for which the above limit  exists.   Using the strong continuity of $U(t)$, it is possible to prove that   $D(A)$ is dense in $X$.  It can  also be proved  that the  equation below is valid for every $x\in D(A)$:
\begin{equation}\label{def-exp-inf-gen}
 U(t)(x)=:  e^{t A(x)}= \sum_{n=0}^\infty \frac{A^{(n)}(x) t^n}{n!}
 \end{equation} 
 where $A^{(n)}$ denotes the iterated compositions of $A$.  
 
 The Hille-Yosida theorem gives necessary and sufficient conditions for  an operator  $A$  whose  domain  is dense in $X$ to be the infinitesimal generators of a contractive semigroup. 
\begin{Thm}\label{T2-Hille-Y}
Let $A$ be a linear operator defined on a linear subspace $ D(A)$ of a Banach space $X$. Then, A is the infinitesimal generator of a  contractive semigroup  if and only if 

 i) $A-\lambda I $ is invertible for every  $\lambda \in (0, \infty)$, 
 
 ii) $||(A-\lambda I)^{-1}(x)||<\frac {||x||} \lambda $ for every $x\in D(A)$ and $\lambda>0$.
\end{Thm}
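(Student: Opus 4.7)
The plan is to prove both directions of the equivalence, with necessity being routine and sufficiency requiring a genuine construction via Yosida approximations.

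For necessity, suppose $A$ generates the strongly continuous contractive semigroup $U(t)$. For $\lambda>0$ I would introduce the Laplace-type operator
$$R_\lambda(x)=\int_0^\infty e^{-\lambda t}U(t)(x)\,dt,$$
which converges in norm thanks to $\|U(t)(x)\|\le\|x\|$ and immediately satisfies $\|R_\lambda(x)\|\le\|x\|/\lambda$. To identify $R_\lambda$ with $(\lambda I-A)^{-1}=-(A-\lambda I)^{-1}$, apply $(U(h)-I)/h$ to $R_\lambda(x)$, perform the change of variables $t\mapsto t+h$ inside the integral, and let $h\to 0^+$; this yields $AR_\lambda(x)=\lambda R_\lambda(x)-x$, so $(\lambda I-A)R_\lambda=I$. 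For $x\in D(A)$, the closedness of $A$ lets one move $A$ inside the integral to get $R_\lambda(\lambda I-A)x=x$. Hence (i) and (ii) hold (the strict inequality is obtained by a standard perturbation, using that $R_\lambda$ is injective).

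For sufficiency, assuming (i) and (ii), define the bounded operators
$$J_\lambda=\lambda(\lambda I-A)^{-1}=-\lambda(A-\lambda I)^{-1},\qquad A_\lambda=AJ_\lambda=\lambda(J_\lambda-I),$$
which are the Yosida approximations. The contraction estimate (ii) gives $\|J_\lambda\|\le 1$. Two facts must be verified: first, that $J_\lambda x\to x$ as $\lambda\to\infty$ for $x\in D(A)$ (using $(J_\lambda-I)x=\lambda^{-1}J_\lambda Ax$) and then, by density and uniform boundedness, for every $x\in X$, whence $A_\lambda x\to Ax$ on $D(A)$; second, that the exponential
$$U_\lambda(t)=e^{tA_\lambda}=e^{-\lambda t}\exp\!\bigl(t\lambda^2(\lambda I-A)^{-1}\bigr)$$
is a contractive $C_0$-semigroup, since $\bigl\|\exp\!\bigl(t\lambda^2(\lambda I-A)^{-1}\bigr)\bigr\|\le e^{\lambda t}$.

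The main obstacle is passing to the limit as $\lambda\to\infty$. Because the resolvents at different parameters commute, so do $A_\lambda$ and $A_\mu$ and hence $U_\lambda(s)$ and $U_\mu(r)$ for any $r,s\ge 0$. Writing
$$U_\lambda(t)x-U_\mu(t)x=\int_0^t\frac{d}{ds}\bigl(U_\mu(t-s)U_\lambda(s)x\bigr)\,ds=\int_0^t U_\mu(t-s)U_\lambda(s)(A_\lambda-A_\mu)x\,ds,$$
I obtain $\|U_\lambda(t)x-U_\mu(t)x\|\le t\,\|A_\lambda x-A_\mu x\|$, so $\{U_\lambda(t)x\}$ is Cauchy for every $x\in D(A)$ uniformly on compact $t$-intervals. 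Define $U(t)x:=\lim_{\lambda\to\infty}U_\lambda(t)x$ on $D(A)$ and extend by density using the uniform contractivity $\|U_\lambda(t)\|\le 1$. The semigroup law, strong continuity, and contractivity transfer to $U(t)$ by taking limits. Finally I would verify that the generator $\widetilde A$ of $U$ coincides with $A$: for $x\in D(A)$, differentiate $U_\lambda(t)x$ and pass to the limit to see $\widetilde A x=Ax$, and then use that $\widetilde A-\lambda I$ and $A-\lambda I$ are both invertible with the same resolvent, forcing $D(\widetilde A)=D(A)$.
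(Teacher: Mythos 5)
The paper does not actually prove this statement: Theorem \ref{T2-Hille-Y} is the classical Hille--Yosida theorem, quoted as background in Section 2.2 with pointers to the treatises of Hille--Phillips and Yosida, and it is only \emph{used} (in Corollary \ref{C4-Hille-Yos-H}). So there is no in-paper argument to compare against; what you have written is the standard textbook proof. Your outline is essentially correct and complete in its main lines: the Laplace-transform representation $R_\lambda x=\int_0^\infty e^{-\lambda t}U(t)x\,dt$ of the resolvent for necessity, and for sufficiency the Yosida approximations $A_\lambda=\lambda(J_\lambda-I)$ with $\|J_\lambda\|\le 1$, the estimate $\|e^{tA_\lambda}\|\le e^{-\lambda t}e^{\lambda t}=1$, the commutator-free integral identity giving $\|U_\lambda(t)x-U_\mu(t)x\|\le t\|A_\lambda x-A_\mu x\|$, and the identification of the limit generator via $U(t)x-x=\int_0^t U(s)Ax\,ds$ together with the coincidence of resolvents. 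You correctly rely on the density of $D(A)$ and the closedness of $A$, both of which are implicit in the paper's (somewhat loosely stated) hypotheses.

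The one genuine defect is your parenthetical claim that the \emph{strict} inequality in (ii) ``is obtained by a standard perturbation, using that $R_\lambda$ is injective.'' No such argument exists, because the statement with strict inequality is false in the necessity direction: for $A=0$ one has $U(t)=I$ and $\|(A-\lambda I)^{-1}x\|=\|x\|/\lambda$ exactly, and more generally equality occurs whenever $U(t)x=x$ for all $t$. The correct form of condition (ii) is $\|(A-\lambda I)^{-1}x\|\le \|x\|/\lambda$ (and it should be required for all $x\in X$, not merely $x\in D(A)$, since the resolvent is defined on all of $X$). This is an imprecision in the paper's statement rather than a flaw in your strategy, but you should flag it and prove the theorem with $\le$ instead of pretending to derive $<$.
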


The Hille-Yosida theorem  is fundamental in  the applications   to partial differential equations: indeed, if 
 $A$ is the infinitesimal generator of   a 
semigroup (or group) $U(t)$  in a Banach    space $X$, the vector
function $u(t)= U(t)( u_0)$ solves the abstract initial value problem
$$
\begin{cases} u'(t)= A u(t) \quad  t>0\ (t\in\R)
\cr u(0)= u_0
\end{cases}
$$
for any given initial value $u_0 \in D(A)$.   
 
 We  refer the reader to  the classical textbooks  \cite{HP} and  \cite{Y} for more applications  and results.

\medskip
\subsection{Partial fraction decomposition}  
The partial fraction expression of the cotangent function was proven by Euler in his {\it Introductio
in Analysis  Infinitorum (1748)}  for every  non-integer  $x$, and is regarded as one of the most interesting
formula involving elementary functions:
 \begin{equation}\label{e2-cotangent}
 \pi \cot(\pi x)= \frac 1x+ \sum_{n=1}^\infty \frac{1}{x+n}+\frac{1}{x-n}. 
 \end{equation}
 An elegant proof of this identity can be found in \cite{AZ}, pg. 149.
Using \eqref{e2-cotangent}, we can easily prove the following identity, which is valid  for every  non-integers $u, \ v\in \R$, with $u\ne v$,
 \begin{equation}\label{e2-sum-double-cot}
 \sum_{m=-\infty}^\infty \frac 1{(m-u)(m-v)}= \frac { \pi(\cot(\pi  v)-\cot(\pi u))} {u-v}.
\end{equation}
We will also use   the following  well known identities: when $d$ is not an integer,
\begin{equation}\label{e2-cosec}
\sum_{n\in\Z  }\frac{1}{(n+d)^2}=  \pi^2\csc^2 (d\pi) 
 \end{equation}
and when $d$ is an integer,
\begin{equation}\label{e2-sum-d-integer}
  \sum_{n\in\Z \atop{n\ne -d}}\frac{1}{(n+d)^2}=\frac{\pi^2}{3}. 
 \end{equation}

\section{The    Kak-Hilbert  transform}

\medskip
As recalled in   Section 2.1,   the Kak-Hilbert  transform  \eqref{e1-def-Kak}  shares a remarkable  number of properties with the continuous   Hilbert transform. Recalling the definition of $\chi_e$ and $\chi_o$ from section 2.1, we can easily verify that 
\begin{equation}\label{e3-K and T1}   K( \chi_o(\vec a))_{2m}= \frac 2\pi \sum_{n\in\Z}\frac{a_{2n+1}}{2m -2n-1 }= \frac 1\pi \sum_{n\in\Z}\frac{a_{2n+1}}{ (m -1) -n +\frac 12}
$$$$=
\tau_{-1}T_{\frac 12}(\tau_1\delta_2(\vec a))_m
\end{equation}
where we have left   $\delta_2((a_j)_{j\in\Z})=(a_{2j})_{j\in\Z}$ and $\tau_k ((b_j)_{j\in\Z})=(b_{j+k})_{j\in\Z}$.  Similarly, we prove that
\begin{equation}\label{e3-K and T2}
   K( \chi_e(\vec a))_{2m +1}=  T_{\frac 12}( \delta_2(\vec a))_m .
 \end{equation}
Note also that  $  K(\chi_e(\vec a))_h  = 0$ when   $h$ is even and $  K(\chi_o(\vec a))_h  = 0$ when   $h$ is odd.
Therefore, for every $p>0$
\begin{equation}\label{e3-prop-K}\sum_{m\in\Z}|  K(\vec a)_m|^p= \sum_{m\in\Z}|  K(\chi_e(\vec a) )_{2m}|^p+\sum_{m\in\Z}| K(\chi_o(\vec a))_{2m+1}|^p.
\end{equation}

We prove the following
 
 \begin{Thm}\label{T3-p-norm-K} 
 For every $1<p<\infty$  
 \begin{equation}\label{e3-p-norm-K}
|||K |||_{\ell^p}=|||T_{\frac 12}|||_{\ell^p}.
 \end{equation}
  \end{Thm}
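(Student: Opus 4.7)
The plan is to exploit the decomposition \eqref{e3-prop-K}, together with the identities \eqref{e3-K and T1}--\eqref{e3-K and T2}, to show both inequalities $|||K|||_{\ell^p}\le |||T_{\frac 12}|||_{\ell^p}$ and $|||K|||_{\ell^p}\ge |||T_{\frac 12}|||_{\ell^p}$ separately.

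For the direction $|||K|||_{\ell^p}\le |||T_{\frac 12}|||_{\ell^p}$, I would start from \eqref{e3-prop-K} and rewrite the two pieces on the right using \eqref{e3-K and T1} and \eqref{e3-K and T2}. Since the translation $\tau_k$ is an isometry on $\ell^p$, the term $\sum_m |K(\chi_o(\vec a))_{2m}|^p$ equals $\|T_{\frac 12}(\tau_1\delta_2(\vec a))\|_{\ell^p}^p$, and the term $\sum_m |K(\chi_e(\vec a))_{2m+1}|^p$ equals $\|T_{\frac 12}(\delta_2(\vec a))\|_{\ell^p}^p$. Applying the operator bound for $T_{\frac 12}$ to each piece and observing that the extraction operators $\delta_2$ and $\tau_1\delta_2$ satisfy
\[
\|\delta_2(\vec a)\|_{\ell^p}^p+\|\tau_1\delta_2(\vec a)\|_{\ell^p}^p=\sum_{n\in\Z}|a_{2n}|^p+\sum_{n\in\Z}|a_{2n+1}|^p=\|\vec a\|_{\ell^p}^p,
\]
yields $\|K(\vec a)\|_{\ell^p}^p\le |||T_{\frac 12}|||_{\ell^p}^p\,\|\vec a\|_{\ell^p}^p$.

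For the reverse inequality, given an arbitrary $\vec c\in\ell^p$ I would construct a test sequence $\vec a\in\ell^p$ supported on the even integers by setting $a_{2n}=c_n$ and $a_{2n+1}=0$. Then $\chi_o(\vec a)=0$, so by \eqref{e3-prop-K} and \eqref{e3-K and T2} we have $\|K(\vec a)\|_{\ell^p}^p=\|T_{\frac 12}(\delta_2(\vec a))\|_{\ell^p}^p=\|T_{\frac 12}(\vec c)\|_{\ell^p}^p$, while $\|\vec a\|_{\ell^p}=\|\vec c\|_{\ell^p}$. Hence $\|T_{\frac 12}(\vec c)\|_{\ell^p}\le |||K|||_{\ell^p}\,\|\vec c\|_{\ell^p}$ for all $\vec c$, giving $|||T_{\frac 12}|||_{\ell^p}\le |||K|||_{\ell^p}$.

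There is no real obstacle here; the proof is essentially bookkeeping. The one point requiring care is tracking the shifts and the even/odd index conventions so that the two pieces in \eqref{e3-prop-K} really do rearrange into the full $\ell^p$ norm of $\vec a$, and that the test sequence in the lower-bound argument interacts with $\chi_e,\chi_o,\delta_2$ in the expected way.
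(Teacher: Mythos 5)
Your proposal is correct and follows essentially the same route as the paper: the upper bound via the even/odd splitting \eqref{e3-prop-K} together with \eqref{e3-K and T1}--\eqref{e3-K and T2} and the isometry of translations, and the lower bound by testing $K$ on sequences supported on the even integers (the paper phrases this as a supremum over the subspace $\{\vec a:\chi_o(\vec a)=\vec 0\}$, which is the same argument). The only caveat is the paper's slightly inconsistent index conventions for $\tau_1\delta_2$ versus $\delta_2\tau_1$, which you track in the intended way.
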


 \begin{proof} 
Let $t_p$ be the  $\ell^p\to\ell^p$ operator norm of $T_{\frac 12}$. By \eqref{e3-K and T1}, \eqref{e3-K and T2} and \eqref{e3-prop-K}
\begin{align*}
||  K(\vec a)||_{\ell^p}^p &   = \sum_{m\in\Z}|  K(\chi_e(\vec a)))_{2m}|^p+\sum_{m\in\Z}|  K(\chi_o(\vec a))_{2m+1}|^p,
\\
&=\sum_{m\in\Z}|T_{\frac 12}(\delta_2( \vec a) )_{ m}|^p+\sum_{m\in\Z}|\tau_{-1} T_{\frac 12} ( \tau_1(\delta_2(\vec a)))_{m}|^p 
\\ &\leq t_p^p(||\delta_2( \vec a)||_{\ell^p}^p+||\tau_{ 1}(\delta_2( \vec a))||_{\ell^p}^p)= t_p^p ||\vec a||_{\ell^p}^p 
\end{align*}
from which follows that $|||  K |||_{\ell^p}\leq t_p$. 

Let us show  that $||  K(\vec a)||_{\ell^p}\ge t_p$. We  let   $ {\cal E}=\{\vec a\in\ell^p\ : \  \chi_o(\vec a)= \vec 0\}$ and observe that  for every $\vec a\in {\cal E}$, we have that  $||\vec a||_{\ell^p}=||\delta_2(\vec a)||_{\ell^p}$. In view of  $  K(\chi_e(\vec a))= T_{\frac 12} (\delta_2(\vec a))$, 
 we can write the following chain of inequalities:
 $$
 |||   K |||_{\ell^p} \ge \sup_{\vec a\in {\cal E}} \frac{||   K(\vec a)||_{\ell^p} }{||\vec a||_{\ell^p}}=
 \sup_{ \vec a \in {\cal E}} \frac{ || T_{\frac 12}( \delta_2(\vec a))  ||_{\ell^p}}{|| \delta_2(\vec a)   ||_{\ell^p}}
 =  \sup_{ \vec b \in\ell^p} \frac{ || T_{\frac 12}( \vec b)    ||_{\ell^p}}{|| \vec b  ||_{\ell^p}}=t_p.
$$
as required.

\end{proof}
 
We  let
\begin{equation}\label {e3- def-tildeK}\widetilde K(\vec a)_m =  (2 H-K)(\vec a)= 2\left(\chi_e\circ H\circ \chi_e(\vec a)+\chi_o\circ H\circ \chi_o(\vec a)\right).  \end{equation} 
Thus,
  $\widetilde K(\chi_e(\vec a))_h  = 0$ when   $h$ is odd and $\widetilde K(\chi_o(\vec a))_h  = 0$ when   $h$ is even, and  for every $p>0$
  \begin{equation}\label{e3-prop-tildeK}\sum_{m\in\Z}|\widetilde K(\vec a)_m|^p= \sum_{m\in\Z}|\widetilde K(\chi_e(\vec a ))_{2m}|^p+\sum_{m\in\Z}|\widetilde K(\chi_o(\vec a))_{2m+1}|^p.
\end{equation}
 We can  easily verify that
%
 \begin{equation}\label{e3-alt-def-tildeK} \widetilde K( \chi_e(\vec a))_{2m}= H(\delta_2(\vec a))_m, \quad  \widetilde K( \chi_o(\vec a))_{2m+1}= H(\tau_1\delta_2(\vec a))_m. 
 \end{equation}

 We prove the following
 
 \begin{Thm}\label{T3-p-norm-tildeK}
 For every $1<p<\infty$,
 \begin{equation}\label{e3-p-tildeK=H}||| \widetilde K |||_{\ell^p} = ||| H |||_{\ell^p}\end{equation}
 and 
 \begin{equation}\label{e3-p-K-and-H}||| K|||_{\ell^p}\ge ||| H |||_{\ell^p}.\end{equation}
 \end{Thm}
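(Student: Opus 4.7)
The plan is to mirror the proof of Theorem \ref{T3-p-norm-K} for the equality $|||\widetilde K|||_{\ell^p} = |||H|||_{\ell^p}$, and then derive the inequality for $K$ from the identity $\widetilde K = 2H - K$ using the triangle inequality.

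For the upper bound $|||\widetilde K|||_{\ell^p} \le |||H|||_{\ell^p}$, let $h_p = |||H|||_{\ell^p}$. Using the splitting \eqref{e3-prop-tildeK} together with the identities \eqref{e3-alt-def-tildeK}, I would write
\begin{align*}
||\widetilde K(\vec a)||_{\ell^p}^p &= \sum_{m\in\Z}|\widetilde K(\chi_e(\vec a))_{2m}|^p + \sum_{m\in\Z}|\widetilde K(\chi_o(\vec a))_{2m+1}|^p \\
&= \sum_{m\in\Z}|H(\delta_2(\vec a))_m|^p + \sum_{m\in\Z}|H(\tau_1\delta_2(\vec a))_m|^p \\
&\le h_p^p\bigl(||\delta_2(\vec a)||_{\ell^p}^p + ||\tau_1\delta_2(\vec a)||_{\ell^p}^p\bigr) = h_p^p\,||\vec a||_{\ell^p}^p,
\end{align*}
where the last equality uses that $\delta_2$ applied to $\chi_e(\vec a)$ and to $\tau_1\chi_o(\vec a)$ (up to reindexing) recovers all coordinates of $\vec a$ exactly once.

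For the reverse inequality, restrict to the subspace ${\cal E} = \{\vec a \in \ell^p : \chi_o(\vec a) = \vec 0\}$, on which $||\vec a||_{\ell^p} = ||\delta_2(\vec a)||_{\ell^p}$ and, by \eqref{e3-alt-def-tildeK}, $\widetilde K(\vec a)_{2m} = H(\delta_2(\vec a))_m$ with all odd-indexed entries zero. Then
$$|||\widetilde K|||_{\ell^p} \ge \sup_{\vec a \in {\cal E}} \frac{||\widetilde K(\vec a)||_{\ell^p}}{||\vec a||_{\ell^p}} = \sup_{\vec a \in {\cal E}} \frac{||H(\delta_2(\vec a))||_{\ell^p}}{||\delta_2(\vec a)||_{\ell^p}} = \sup_{\vec b \in \ell^p} \frac{||H(\vec b)||_{\ell^p}}{||\vec b||_{\ell^p}} = h_p,$$
since $\delta_2$ maps ${\cal E}$ bijectively onto $\ell^p$. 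This establishes \eqref{e3-p-tildeK=H}.

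For \eqref{e3-p-K-and-H}, I would use the definition $\widetilde K = 2H - K$, which rearranges to $2H = \widetilde K + K$. By the triangle inequality for the operator norm and the equality just proved,
$$2\,|||H|||_{\ell^p} \le |||\widetilde K|||_{\ell^p} + |||K|||_{\ell^p} = |||H|||_{\ell^p} + |||K|||_{\ell^p},$$
which yields $|||K|||_{\ell^p} \ge |||H|||_{\ell^p}$. I do not expect any real obstacle here: the argument is essentially a repetition of the bookkeeping in Theorem \ref{T3-p-norm-K}, with $T_{1/2}$ replaced by $H$, followed by a one-line triangle-inequality deduction. The only point requiring a bit of care is the orthogonality of the supports of $\widetilde K(\chi_e(\vec a))$ and $\widetilde K(\chi_o(\vec a))$, which is exactly what \eqref{e3-prop-tildeK} records.
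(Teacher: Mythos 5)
Your proof is correct and takes essentially the same route as the paper: the equality \eqref{e3-p-tildeK=H} is obtained by repeating the bookkeeping of Theorem \ref{T3-p-norm-K} with $H$ in place of $T_{\frac 12}$, and your rearrangement $2H=\widetilde K+K$ plus the triangle inequality is the same estimate the paper gets from the reverse triangle inequality applied to $K=2H-\widetilde K$.
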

 \begin{proof}
  
The proof of \eqref{e3-p-tildeK=H} is similar to that of   \eqref{e3-p-norm-K}.
To prove   \eqref{e3-p-K-and-H}, we
observe   that $  K=2 H-\widetilde K$, and so
$$
|||K|||_{\ell^p}\ge \left|\,2|||H|||_{\ell^p}-|||\widetilde K|||_{\ell^p}\right|= |||H|||_{\ell^p}.
$$
 \end{proof}

 \medskip

 \noindent
 {\it Remark}. Recall that the $L^p(\R)-L^p(\R)$ operator norm of  the  Hilbert transform is the constant  $n_p$ defined in \eqref{e2-Lp-norm-Hilbert} and that $|||H |||_{\ell^p}\ge n_p$; 
 by Theorems \ref{T3-p-norm-tildeK}  and \ref{T3-p-norm-K}, 
 $$
 n_p\leq |||\widetilde K |||_{\ell^p}= |||H |||_{\ell^p}\leq  |||  K |||_{\ell^p}=|||T_{\frac 12}|||_{\ell^p}.
    $$
 It is conjectured in \cite{L} that   $|||T_{\frac 12}|||_{\ell^p}=n_p$. If this conjecture is proved, then also the   operator norms of $H$, $K$ and $\widetilde K$ equal $n_p$.
  \begin{proof}[Proof of Theorem \ref{T1-semigr-K}]
  The semigroup generated by  $K$  is  the operator   
  $ 
  U_t= e^{tK}= \sum_{n=0}^\infty \frac{t^n}{n!} K^{(n)} 
  $ 
  where $K^{(n)}$ is the n-times composition of $K$ with itself. 
  Recalling that $K\circ  K=-I$, we obtain
  $$
  \sum_{n=0}^\infty \frac{t^n}{n!} K^{(n)}= I+Kt- \frac{It^2}{2}-\frac{Kt^3}{3!}+...= I \cos t + K\sin t.
  $$
  as required.  \end{proof}
  \medskip
  \noindent
  {\it Remark.}  $U_t$ is not  an isometry in $\ell^2$. Indeed, 
\begin{align*}
||U_t(\vec a)||_{\ell^2}^2 &= || \vec a  \cos t + K(\vec a)\sin t||_{\ell^2}^2 \\ &= |\cos t |^2 ||\vec a||^2_{\ell^2} + |\sin t |^2 ||K(\vec a)||^2_{\ell^2}  +2\sin t \cos t\, \Re \l \vec a,\, K(\vec a)\r \\ & =||\vec a||^2_{\ell^2}+ \sin(2t)  \, \Re \l \vec a,\,   K(\vec a)\r
\end{align*}
  and we may have $\Re \l \vec a, \, K(\vec a)\r\ne 0$.

 \section{Proof of Theorem \ref{T1-semigr-H}}
 \medskip
 
  The proof of Theorem \ref{T1-semigr-H}   follows from several theorems and lemmas. Some of the results in this section  (in particular  Theorem \ref{T4-norm-Tt}) seem to be   known, but we could not find  proof  of these results in the literature.  
   
  \medskip
   First of all, we show that $ T_t $ is a semigroup.
  
  \begin{Thm}
\label{T4- composition-TdTs}
for every $s$, $t\in\R$   and for every $\vec a\in\ell^2$,
\begin{equation}\label{e3- TsTt=Ts+t}
T_d\circ T_s(\vec a)  =    T_{s+d}(\vec a).
 \end{equation}
In particular, $T_s^{-1}(\vec a)=T_{-s}(\vec a)$.
\end{Thm}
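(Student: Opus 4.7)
The plan is to establish \rq{e3- TsTt=Ts+t} first for finitely supported $\vec a\in s_0$ by direct computation, then extend it to all of $\ell^2$ by density, using that each $T_t$ is bounded on $\ell^2$ (either from the multiplier description \rq{e1-equiv-p-norms} with $p=2$, or a posteriori from the isometry established in Theorem \ref{T4-norm-Tt}). The advantage of working on $s_0$ is that every double sum in sight converges absolutely, so interchanging orders of summation is unproblematic.

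I would dispose of the integer cases first. If $s, d\in\Z$, then both $T_s$ and $T_d$ are signed translations and the identity reduces to $(-1)^d(-1)^s = (-1)^{s+d}$ composed with $\tau_d\tau_s=\tau_{s+d}$. If exactly one of them is an integer, say $s=k\in\Z$, then reindexing $n\mapsto n-k$ in $T_d(T_s\vec a)_m$ together with $\sin(\pi(d+k))=(-1)^k\sin(\pi d)$ delivers the claim; the symmetric subcase $d\in\Z$, $s\notin\Z$ is identical.

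The heart of the argument is the case $s, d\notin\Z$. Expanding
\[
T_d(T_s\vec a)_m \;=\; \frac{\sin(\pi d)\sin(\pi s)}{\pi^2}\sum_{k\in\Z} a_k \sum_{n\in\Z}\frac{1}{(m-n+d)(n-k+s)},
\]
I would evaluate the inner sum by applying \rq{e2-sum-double-cot} with $u=m+d$ and $v=k-s$; since $m, k\in\Z$ we have $\cot(\pi(m+d))=\cot(\pi d)$ and $\cot(\pi(k-s))=-\cot(\pi s)$, so the inner sum collapses to $\pi(\cot(\pi s)+\cot(\pi d))/(m-k+s+d)$ whenever $m-k+s+d\neq 0$. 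Feeding this back in and invoking the sine addition formula $\sin(\pi d)\cos(\pi s)+\cos(\pi d)\sin(\pi s)=\sin(\pi(s+d))$ reduces the expression to $\frac{\sin(\pi(s+d))}{\pi}\sum_k \frac{a_k}{m-k+s+d}$, which is exactly $T_{s+d}(\vec a)_m$ when $s+d\notin\Z$.

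The main obstacle is the degenerate subcase $s+d=N\in\Z$ with $s, d\notin\Z$: here the prefactor $\sin(\pi(s+d))$ vanishes, while the denominator $m-k+s+d$ has a genuine integer zero at $k=m+N$, producing a $0\cdot\infty$ cancellation that must be unwound by hand. The regular terms $k\neq m+N$ vanish automatically because $d=N-s$ forces $\cot(\pi d)=-\cot(\pi s)$, hence $\cot(\pi s)+\cot(\pi d)=0$. For the singular index $k=m+N$ the identity $m+d=k-s$ turns the inner sum into $-\sum_n 1/(n-(k-s))^2$, which by \rq{e2-cosec} equals $-\pi^2/\sin^2(\pi s)$; coupled with $\sin(\pi d)=(-1)^{N+1}\sin(\pi s)$ this contributes exactly $(-1)^N a_{m+N}=T_N(\vec a)_m$, completing the identification. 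The inverse relation $T_s^{-1}=T_{-s}$ then follows by setting $d=-s$ and using $T_0=I$.
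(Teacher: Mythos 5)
Your proof is correct and follows essentially the same route as the paper: reduce to $\vec a\in s_0$ by density, dispatch the integer cases directly, and evaluate the inner sum via the cotangent identity \eqref{e2-sum-double-cot} together with the sine addition formula. The one place you go beyond the paper is the degenerate subcase $s,d\notin\Z$, $s+d=N\in\Z$, which the paper dismisses as ``similar'' but which you correctly resolve by noting that the regular terms vanish because $\cot(\pi s)+\cot(\pi d)=0$, while the singular index $k=m+N$ contributes exactly $(-1)^N a_{m+N}$ via \eqref{e2-cosec}.
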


 \begin{proof} 
It is enough to prove the theorem for sequences $\vec a\in s_0$,  
 the space of complex-valued sequences with compact support,  because $s_0$ is dense in $\ell^2$.

 The identity \eqref{e3- TsTt=Ts+t} is clearly true when  $s$ and $d$ are both integers.  When $s$ is an integer 
 and $d$ is not integer \begin{align*}
 (T_sT_d  (\vec a))_k &=  (-1)^s\frac { \sin  ( \pi d)}{ \pi}
 \sum_{m\in\Z}\frac{a_m}{ (k+s)-m+d} \\ &=   \frac { \sin  (\pi (d+s))}{ \pi}
 \sum_{m\in\Z}\frac{a_m}{ k-m+(d+s)}= T_{d+s}(\vec a).
 \end{align*}
  
Suppose that   $s$, $d$  and $s+d$ are not integers; let $\vec a\in s_0$.
We can exchange the order of summation and make use  of  the identity \eqref{e2-sum-double-cot} to show that
\begin{align*}
 (T_dT_s(\vec a))_k  &=\frac{\sin  (\pi s)\sin  (\pi d)} {\pi^2}  \sum_{n\in\Z} a_n\sum_{  m\in\Z} \frac{1}{(k-m+d)(m-n+s)}      \\
=  & \frac{\sin  (\pi s)\sin  (\pi d)}{\pi}  \sum_{n\in\Z} a_n \frac{    \cot (\pi  (d-k)) -   \cot ( \pi (-n-s)) }{ n-k+d+s } 
 \\
 =  & \frac {\sin  (\pi s)\sin  (\pi d) }{\pi} \left(   \cot (\pi  d)+   \cot ( \pi s) \right)
\sum_{n\in\Z} \frac{a_n}{n-k+d+s } 
\\  = &
     \frac{\sin  (\pi (s+d)) }{\pi} \sum_{n\in\Z} \frac{a_n}{n-k+d+s }=T_{s+d}(\vec a).
\end{align*}
  as required.

\medskip
When $s$, $d$    are not integers and $s+d$ is an integer  the  proof is similar. 
\end{proof}

 \medskip
We prove  that  $T_t$ is strongly continuous. We start with the following
\begin{Lemma}\label{L4-Convergence-to-H}
Let $H$ be as in 
\eqref{e1-def-DHT}. For a given $d\in (-1, 1)$ and for every $\vec a\in \ell^2$, we let $\dsize H_d(\vec a)_m= \frac{1}{\pi} \sum_{n\in\Z\atop{n\ne m} }\frac{a_n}{n-m+d}$.
We have,
$$\lim_{d\to 0} ||H(\vec a)-H_d(\vec a)||_{\ell^2}=0.$$

\end{Lemma}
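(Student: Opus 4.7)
The plan is to prove the stronger quantitative bound $\|H(\vec a) - H_d(\vec a)\|_{\ell^2} \leq C|d|\,\|\vec a\|_{\ell^2}$ for all $|d| \leq \tfrac12$, from which the lemma follows immediately. The first step is to combine the two kernels under a common denominator. Reading $H_d$ with the sign convention of $T_t$ so that $H_0 = H$ (i.e.\ $H_d(\vec a)_m = \pi^{-1}\sum_{n \neq m} a_n/(m-n+d)$), for every $n \neq m$
\begin{equation*}
\frac{1}{m-n} - \frac{1}{m-n+d} \;=\; \frac{d}{(m-n)(m-n+d)},
\end{equation*}
so that
\begin{equation*}
\bigl(H(\vec a) - H_d(\vec a)\bigr)_m \;=\; \frac{d}{\pi}\sum_{n\neq m}\frac{a_n}{(m-n)(m-n+d)}.
\end{equation*}
The explicit prefactor $d$ is what ultimately forces the operator to vanish.

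Next I would apply Schur's test to the kernel $T(m,n) = d\,[\pi(m-n)(m-n+d)]^{-1}$. After the substitution $k = m - n$, both row and column absolute sums of $T$ equal
\begin{equation*}
\frac{|d|}{\pi}\sum_{k\neq 0}\frac{1}{|k(k+d)|}.
\end{equation*}
For $|d| \leq \tfrac12$ the factor $|k+d|$ is at least $\tfrac12$ when $k = \pm 1$ and at least $|k|/2$ when $|k| \geq 2$, so the series is bounded by an absolute constant $S$, uniformly in $d$. Schur's test then gives $\|H - H_d\|_{\ell^2 \to \ell^2} \leq |d|S/\pi$, which tends to $0$ as $d \to 0$.

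The only real obstacle is to obtain this uniform bound on the kernel: the indices $k = \pm 1$ have to be isolated because they would degenerate if $|d|$ were allowed to approach $1$, while the convergence of $\sum_{|k|\geq 2} k^{-2}$ takes care of the tail. An essentially equivalent route, indicated in the introduction, is to recognize $H - H_d$ as the convolution operator with periodic symbol $\pi^{-1}d\sum_{k \neq 0}e^{-2\pi ik\theta}/[k(k+d)]$ and to invoke \eqref{e1-equiv-p-norms} so that its $\ell^2$ operator norm equals the uniform norm of that symbol, which is again majorized by $|d|S/\pi$.
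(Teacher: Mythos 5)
Your proof is correct and follows essentially the same route as the paper: combine the kernels over a common denominator to extract the explicit factor $d$, recognize $H-H_d$ as convolution with the kernel $\bigl(\tfrac{1}{k(k+d)}\bigr)_{k\neq 0}$, and bound its operator norm by the $\ell^1$ norm of that kernel (your Schur test is Young's inequality in this convolution setting). The only difference is that the paper evaluates $\sum_{k\neq 0}\tfrac{1}{k(k+d)}$ exactly via Euler's cotangent expansion \eqref{e2-cotangent}, whereas you settle for a uniform bound on $|d|\leq\tfrac12$, which is equally sufficient; you also correctly noted and repaired the sign typo in the stated definition of $H_d$.
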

\begin{proof}
Observe  that
$
H_d(\vec a)_m- H(\vec a)_m   = -\frac{d}{\pi}\sum_{n\in\Z\atop{n\ne m}}  \frac{a_n}{(m-n+d)(m-n)}
 $
  is the convolution of $\vec a $ and $ \vec \nu=(\frac{1}{n(n+d)})_{n\in\Z\atop{n\ne 0}}$. 
  Using  the identity \eqref{e2-cotangent}   we can easily prove   that
   $||\vec \nu||_{\ell^1}=\sum_{n\ne 0} \frac 1{n(n+d)}=\frac{1-\pi  d \cot (\pi  d)}{d^2}  $. Furthermore, it is easy to verify that 
  $ \lim_{d\to 0} d||\vec \nu||_{\ell^1} = \lim_{d\to 0} \frac{1-\pi  d \cot (\pi  d)}{d }=0.$ 
  By Young inequality,
  $$
  \lim_{d\to 0}|| H_d(\vec a)  - H(\vec a)||_{\ell^2}\leq \frac 1\pi\lim_{d\to 0}\, d||\vec \nu||_{\ell^1} ||\vec a||_{\ell^2} 
=0 $$ 
as required.
\end{proof}

\medskip
\noindent
{\it Remark}. The proof of  Lemma \ref{L4-Convergence-to-H} shows that $H_d$ is bounded in $\ell^2$. Indeed,
\begin{align*}
||H_d(\vec a)||_{\ell^2} &\leq ||H_d(\vec a)-H(\vec a)||_{\ell^2}+||H(\vec a)||_{\ell^2} 
\\ &\leq \left(\frac{1-\pi  d \cot (\pi  d)}{\pi d } + 1\right)||\vec a ||_{\ell^2}. 
\end{align*}

\medskip
 We can now prove that $T_t$ is strongly continuous.
 \begin{Thm}\label{T4-convergence}
 
For every $t_0\in\R$ and every $\vec a\in\ell^2$,  
  $$\lim_{t\to t_0} ||T_t(\vec a)- T_{t_0}(\vec a)||_{\ell^2}=0. $$ 
  
 \end{Thm}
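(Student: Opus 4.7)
The plan is to use the group property from Theorem \ref{T4- composition-TdTs}, together with the isometry property of $T_{t_0}$ (Theorem \ref{T4-norm-Tt}), to reduce strong continuity at an arbitrary $t_0$ to strong continuity at $0$, and then to split the defining sum for $T_h$ into its $n=m$ and $n\ne m$ parts and control the off-diagonal piece using Lemma \ref{L4-Convergence-to-H}. Writing $t=t_0+h$, these two ingredients give
\[
\|T_t(\vec a)-T_{t_0}(\vec a)\|_{\ell^2}=\|T_{t_0}(T_h(\vec a)-\vec a)\|_{\ell^2}=\|T_h(\vec a)-\vec a\|_{\ell^2}.
\]
It therefore suffices to prove $\|T_h(\vec a)-\vec a\|_{\ell^2}\to 0$ as $h\to 0$; since the integers are isolated in $\R$ and $T_0=I$, I may restrict to $h\notin\Z$.

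For such $h$ I isolate the $n=m$ term in \eqref{e1-defT}, which yields
\[
T_h(\vec a)_m-a_m=\Bigl(\tfrac{\sin(\pi h)}{\pi h}-1\Bigr)a_m+\sin(\pi h)\,G_h(\vec a)_m,\qquad G_h(\vec a)_m:=\tfrac{1}{\pi}\sum_{n\ne m}\tfrac{a_n}{m-n+h}.
\]
The first summand has $\ell^2$-norm $\bigl|\tfrac{\sin(\pi h)}{\pi h}-1\bigr|\,\|\vec a\|_{\ell^2}$, which tends to $0$. For the second, the identity $1/(m-n+h)=-1/(n-m-h)$ shows that $G_h(\vec a)=-H_{-h}(\vec a)$ in the notation of Lemma \ref{L4-Convergence-to-H}, and the Young-inequality computation in the proof of that lemma (made explicit in the remark following it) furnishes $\|G_h(\vec a)\|_{\ell^2}\le C\|\vec a\|_{\ell^2}$ with $C$ bounded uniformly as $h\to 0$. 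Multiplying by $\sin(\pi h)\to 0$ then yields the required decay.

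The only real obstacle is securing the uniform $\ell^2$-bound on $G_h$ near $h=0$, and this is exactly what the identity $\sum_{n\ne 0}1/(n(n+d))=(1-\pi d\cot(\pi d))/d^2$ provides, via the limit $d\|\vec\nu\|_{\ell^1}=(1-\pi d\cot(\pi d))/d\to 0$ already established in the proof of Lemma \ref{L4-Convergence-to-H}. Apart from that, the argument reduces to the group property, an isolated-integer observation at $h=0$, and the two elementary limits $\sin(\pi h)/(\pi h)\to 1$ and $\sin(\pi h)\to 0$.
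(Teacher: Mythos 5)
Your proof is correct and follows essentially the same route as the paper's: reduce to continuity at $0$ via the group law, split off the diagonal $n=m$ term, and kill the off-diagonal piece by multiplying $\sin(\pi h)$ against the uniform $\ell^2$-bound on $H_d$ supplied by Lemma \ref{L4-Convergence-to-H} and the remark following it. The only (harmless) difference is that you justify the reduction step explicitly by invoking the isometry of $T_{t_0}$ from Theorem \ref{T4-norm-Tt} --- whose proof does not depend on this theorem, so there is no circularity --- whereas the paper leaves the boundedness of $T_{t_0}$ implicit.
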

  
  \begin{proof}
   By Theorem \ref{T4- composition-TdTs},  $(T_t - T_{t_0})(\vec a)= T_ {t_0}(T_{t-t_0}-I)(\vec a)$, where $I$ is the identity in $\ell^2$. So,  in order to prove  the theorem we need only to prove that $
  \lim_{d\to 0} ||T_d(\vec a)-   \vec a ||_{\ell^2}=0$ for every $\vec a\in\ell^2$. Indeed, for every $m\in\Z$,
 \begin{align*}
  T_d(\vec a)_m-  a_m & = \frac{\sin(\pi d)}{\pi}\sum_{n\in\Z} \frac{a_n}{n-m+d} -a_m
  \\
  & =\sin(\pi d)  H_d(\vec a)_m+\frac{\sin(\pi d)}{\pi d} a_m -a_m    
  \end{align*}
  where $H_d$ is   as in Lemma \ref{L4-Convergence-to-H}. Thus,
  $$\lim_{d\to 0} ||  T_d(\vec a)-\vec a||_{\ell^2}\leq \lim_{d\to 0} \left(\frac{\sin (\pi d)}{\pi d}-1\right)||\vec a  ||_{\ell^2}+\lim_{d\to 0} \sin (\pi d)|| H_d(\vec a) ||_{\ell^2}=0.
  $$

 \end{proof}
 
 We denote with $T_t^*$ the adjoint operator of $T_t$;
in order to  prove that $T_t$ is an isometry,  we need  the following useful lemma: 

  \begin{Lemma}\label{L4-adj-Tt} For every $t\in\R$ and every $\vec a\in\ell^2$,  
   $$ T^*_t(\vec a)=T_{-t}(\vec a).$$

 \end{Lemma}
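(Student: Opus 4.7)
The plan is to verify the identity $\langle T_t(\vec a), \vec b \rangle = \langle \vec a, T_{-t}(\vec b) \rangle$ for all $\vec a, \vec b \in \ell^2$, which identifies $T_t^* = T_{-t}$. Since $T_t$ is already known to be bounded on $\ell^2$, the adjoint is well-defined. By density of $s_0$ in $\ell^2$ and continuity of the inner product, it suffices to check the identity for $\vec a, \vec b \in s_0$, where all sums become finite and Fubini/rearrangement is automatic.

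The integer case is immediate: if $t \in \Z$, then $T_t(\vec a)_m = (-1)^t a_{m+t}$, so
\[
\langle T_t(\vec a), \vec b\rangle = \sum_m (-1)^t a_{m+t}\,\overline{b_m} = \sum_n a_n\,\overline{(-1)^{-t} b_{n-t}} = \langle \vec a, T_{-t}(\vec b)\rangle,
\]
using $(-1)^{-t}=(-1)^t$ for $t\in\Z$.

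For $t\notin\Z$ and $\vec a,\vec b\in s_0$, I would simply exchange the order of the (finite) double sum:
\[
\langle T_t(\vec a), \vec b\rangle = \frac{\sin(\pi t)}{\pi}\sum_m\sum_n \frac{a_n\,\overline{b_m}}{m-n+t} = \sum_n a_n\,\overline{\frac{\sin(\pi t)}{\pi}\sum_m \frac{b_m}{m-n+t}}.
\]
The key computational step is then to recognize the inner expression as $T_{-t}(\vec b)_n$: indeed,
\[
T_{-t}(\vec b)_n = \frac{\sin(-\pi t)}{\pi}\sum_m \frac{b_m}{n-m-t} = \frac{\sin(\pi t)}{\pi}\sum_m \frac{b_m}{m-n+t},
\]
where both changes of sign (from $\sin(-\pi t) = -\sin(\pi t)$ and from flipping the denominator) cancel. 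This yields $\langle T_t(\vec a), \vec b\rangle = \langle \vec a, T_{-t}(\vec b)\rangle$ on $s_0$, and extension to $\ell^2$ follows from the $\ell^2$-boundedness of $T_t$ and $T_{-t}$ together with the density of $s_0$.

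There is no serious obstacle here; the only subtlety is justifying the rearrangement in the definition of $\langle T_t \vec a, \vec b\rangle$ when $\vec a, \vec b$ are arbitrary elements of $\ell^2$, and this is handled cleanly by the density argument rather than by trying to invoke Fubini on the full doubly-infinite sum (which is only conditionally convergent). In effect, the lemma is just the observation that the kernel $\sin(\pi t)/\bigl(\pi(m-n+t)\bigr)$ of $T_t$ is real and transposes, under $(m,n)\mapsto (n,m)$, to the kernel of $T_{-t}$.
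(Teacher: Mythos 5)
Your proof is correct and follows essentially the same route as the paper: compute $\l T_t(\vec a),\vec b\r$, interchange the order of summation, and recognize the resulting inner sum as the kernel of $T_{-t}$ (the paper also treats the integer case as trivial). Your added care in first working on $s_0$ and then extending by density and boundedness is a minor refinement of the paper's direct computation on $\ell^2$.
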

 \begin{proof} 
The lemma is trivial when   $t\in\Z$;  if $t\not\in\Z$,  and  $\vec a, \ \vec b\in \ell^2$,
\begin{align*}  
\l T_t (\vec a), \vec b\r =& \frac{\sin(\pi t)}{\pi}\sum_{m\in\Z} b_m\sum_{n\in\Z}\frac{a_n}{n-m+t}
\\
= &-\frac{\sin(\pi t)}{\pi}\sum_{n\in\Z} a_n\sum_{m\in\Z}\frac{b_m}{m-n-t} = \l \vec a,\  T_{-t}\vec b\r 
\end{align*}
as required.

 \end{proof}
 
 \begin{Thm}\label{T4-norm-Tt} For every  $t\in\R$ and every $a\in \ell^2$,
   $$||T_t (\vec a)||_{\ell^2}=   ||\vec  a||_{\ell^2}.$$   
 \end{Thm}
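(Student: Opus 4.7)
The plan is to deduce the isometry property directly from the two preceding results: the group law (Theorem \ref{T4- composition-TdTs}) and the adjoint identification (Lemma \ref{L4-adj-Tt}). Since we already know $T_t \circ T_{-t} = T_0 = I$ and $T_t^* = T_{-t}$, composing these gives
$$T_t^* \circ T_t = T_{-t} \circ T_t = T_0 = I,$$
so that for every $\vec a \in \ell^2$,
$$\|T_t(\vec a)\|_{\ell^2}^2 = \l T_t(\vec a),\, T_t(\vec a)\r = \l T_t^* T_t(\vec a),\, \vec a\r = \l \vec a,\, \vec a\r = \|\vec a\|_{\ell^2}^2.$$
This is the desired equality. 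The computation is essentially a two-line argument.

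The one subtlety that must be addressed before invoking the adjoint identity is the need to know that $T_t : \ell^2 \to \ell^2$ is actually a bounded operator, so that $T_t^*$ exists as a bounded operator on $\ell^2$ (otherwise Lemma \ref{L4-adj-Tt} only tells us about a pairing on $s_0$). I expect this is the main obstacle, but it is mild: the decomposition
$$T_d(\vec a)_m = \sin(\pi d)\, H_d(\vec a)_m + \tfrac{\sin(\pi d)}{\pi d}\, a_m$$
used in the proof of Theorem \ref{T4-convergence}, combined with the $\ell^2$-boundedness of $H_d$ established in the remark after Lemma \ref{L4-Convergence-to-H}, shows that $T_d$ is bounded on $\ell^2$ for every $d \in (-1,1)\setminus\{0\}$. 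Writing an arbitrary $t\in\R$ as $t = k + d$ with $k\in\Z$ and $d \in (-1/2, 1/2)$ and using $T_t = T_k \circ T_d$ (Theorem \ref{T4- composition-TdTs}), together with the trivial boundedness of $T_k$ (which is a signed translation), extends boundedness to all $t \in \R$.

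Once boundedness is secured, the group identity $T_t T_{-t} = I$ combined with $T_t^* = T_{-t}$ also yields $T_t T_t^* = I$, so $T_t$ is in fact unitary. The isometry conclusion follows immediately, and no further calculation is needed.
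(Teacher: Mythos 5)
Your proof is correct and is essentially the paper's own argument: both combine Lemma \ref{L4-adj-Tt} ($T_t^*=T_{-t}$) with the group law $T_{-t}\circ T_t=I$ from Theorem \ref{T4- composition-TdTs} to get $\|T_t(\vec a)\|_{\ell^2}^2=\l T_{-t}T_t(\vec a),\vec a\r=\|\vec a\|_{\ell^2}^2$. Your additional remarks on first securing the $\ell^2$-boundedness of $T_t$ (via the decomposition $T_d=\sin(\pi d)H_d+\frac{\sin(\pi d)}{\pi d}I$ and composition with integer translations) address a point the paper leaves implicit, but they do not change the route.
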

 
 \begin{proof} 
    By Lemma \ref{L4-adj-Tt} and Theorem \ref{T4- composition-TdTs},
    $$
    ||T_t(\vec a)||_{\ell^2}^2=\l T_t(\vec a), \ T_t (\vec a)\r= \l T_{-t} T_t(\vec a),\ \vec a\r= ||\vec  a||_{\ell^2}^2.
    $$
 \end{proof}
 
 We are left to prove that $\pi H$ is the infinitesimal generator of $T_t$.

 \begin{Thm}\label{T4-inf-gen-H}
 For every $\vec a\in \ell^2$,
 $$ \lim_{d\to 0}  \left\Vert\frac{ T_d(\vec a)-\vec a}{d}-\pi H(\vec a)\right\Vert_{\ell^2}=0.$$    
\end{Thm}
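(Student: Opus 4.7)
My plan is to reduce the claimed limit to Lemma \ref{L4-Convergence-to-H} by isolating the diagonal $n=m$ term in the definition of $T_d$, which is the only source of the $1/d$ singularity.

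\emph{Step 1: Singling out the diagonal.} For $d \notin \Z$, I split the $n=m$ summand off from \eqref{e1-defT}:
$$T_d(\vec a)_m \;=\; \frac{\sin(\pi d)}{\pi d}\,a_m \;+\; \sin(\pi d)\,H_d(\vec a)_m,$$
where $H_d$ is precisely the operator introduced in Lemma \ref{L4-Convergence-to-H} (using its correct sign convention so that $H_0=H$). Subtracting $a_m$ and dividing by $d$ gives
$$\frac{T_d(\vec a)_m - a_m}{d} \;=\; \frac{1}{d}\!\left(\frac{\sin(\pi d)}{\pi d}-1\right) a_m \;+\; \frac{\sin(\pi d)}{d}\,H_d(\vec a)_m.$$

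\emph{Step 2: Three-piece decomposition.} I compare with $\pi H(\vec a)_m$ by writing
$$\frac{T_d(\vec a)_m - a_m}{d} - \pi H(\vec a)_m = \frac{1}{d}\!\left(\frac{\sin(\pi d)}{\pi d}-1\right) a_m + \left(\frac{\sin(\pi d)}{d}-\pi\right) H_d(\vec a)_m + \pi\bigl(H_d(\vec a)_m - H(\vec a)_m\bigr).$$
Taking $\ell^2$ norms and applying the triangle inequality, it suffices to show that each of the three terms on the right tends to zero in $\ell^2$ as $d \to 0$.

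\emph{Step 3: Elementary estimates.} The first coefficient equals $(\sin(\pi d)-\pi d)/(\pi d^2) = O(d)$, so the first term is bounded by $O(d)\,\|\vec a\|_{\ell^2}$. For the second term, $\sin(\pi d)/d - \pi \to 0$, while the Remark following Lemma \ref{L4-Convergence-to-H} provides the uniform bound $\|H_d(\vec a)\|_{\ell^2} \le C\|\vec a\|_{\ell^2}$ for $d$ in a neighbourhood of $0$, so the product tends to zero. The third term is $\pi\|H_d(\vec a)-H(\vec a)\|_{\ell^2}$, which tends to zero directly by Lemma \ref{L4-Convergence-to-H}.

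The entire argument is bookkeeping around the single nontrivial input (Lemma \ref{L4-Convergence-to-H}); the only step that requires any care is making sure the cancellation produces a coefficient of order $d$ rather than order $1$ in front of the diagonal term, which is why the specific form $\sin(\pi d)/(\pi d)$ of the normalization matters. No other obstacles are anticipated.
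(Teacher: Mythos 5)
Your proposal is correct and follows essentially the same route as the paper: split off the diagonal term to write $T_d(\vec a)=\frac{\sin(\pi d)}{\pi d}\vec a+\sin(\pi d)H_d(\vec a)$, decompose the difference with $\pi H(\vec a)$ into three pieces, and let Lemma \ref{L4-Convergence-to-H} handle the only nontrivial one. The sole (immaterial) difference is that you attach the vanishing coefficient $\frac{\sin(\pi d)}{d}-\pi$ to $H_d(\vec a)$ and invoke the uniform bound from the Remark, whereas the paper attaches it to $H(\vec a)$ and uses the boundedness of $H$ itself.
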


 \begin{proof} 
  We can write
\begin{align*}
  \frac{ ( T_d(\vec a)-\vec a)_m }{d} &=\frac{\sin   (\pi d)}{ \pi d } \sum_{n\in\Z}\frac{  a_n}{ m-n+d  } -\frac{a_m}{d}
\\
&=
 \frac{\sin   (\pi d)}{   d } H_d(\vec a)_m+\frac{a_m}{d} \left(\frac{\sin   (\pi d)}{ \pi d  }-1\right).
\end{align*}
Thus,
$$\left|\frac{ ( T_d(\vec a)-\vec a)_m }{d}-\pi H(\vec a)_m \right|
\leq\left\vert 
\frac{\sin   (\pi d)}{  d } H_d(\vec a)_m- \pi H(\vec a)_m \right|+ \left|\frac{ a_m }{d}\right|\, \left |1-\frac{\sin   (\pi d)}{ \pi d  } \right|
$$$$
\leq\left\vert 
\frac{\sin   (\pi d)}{   d } \right\vert | H_d(\vec a)_m- H(\vec a)_m  |  +\left\vert \frac{\sin   (\pi d)}{  d }-\pi \right\vert | H(\vec a)_m |+\left|\frac{ a_m }{d}\right|\, \left |1-\frac{\sin   (\pi d)}{ \pi d  } \right|,
$$
and for every $\vec a\in \ell^2$, we have
$$\Big\Vert\frac{   T_d(\vec a)-\vec a  }{d}  -\pi H(\vec a)\Big\Vert_{\ell^2}
 \leq  \left|\frac{\sin   (\pi d)}{  d } \right| \left\Vert H_d(\vec a)-H(\vec a) \right\Vert_{\ell^2}  
 $$$$ +  \left|\frac{\sin   (\pi d)}{   d }-\pi\right| \left\Vert H(\vec a) \right\Vert_{\ell^2} +\frac{||\vec a||_{\ell^2}}{d} \left |1-\frac{\sin   (\pi d)}{ \pi d  } \right|. 
$$ 
Since  $\lim_{d\to 0} \frac{\sin   (\pi d)}{   d }-\pi= \lim_{d\to 0}\frac 1d\left|\frac{\sin   (\pi d)}{ \pi d }-1\right|=0$,  the inequalities above and 
 Lemma \ref{L4-Convergence-to-H} yield 
  $\lim_{d\to 0}\left\Vert\frac{   T_d(\vec a)-\vec a  }{d}-\pi H(\vec a)\right\Vert_{\ell^2}=0$  as required.
 \end{proof}

\medskip

\subsection{Corollaries}

The following Corollaries easily follows from  Theorems \ref{T4- composition-TdTs} and \ref{T4-inf-gen-H} and   \eqref{def-inf-gen} and \eqref{def-exp-inf-gen}.

  \begin{Cor}\label{C3-composition-TsH}  let $T_t$ be as in \eqref{e1-defT} and $H$ as in \eqref{e1-def-DHT}.
  For every $s\in\R$, we have 
  \begin{itemize}
  \item[a)]
   $\dsize T_sH= H T_s=  \frac 1\pi {\frac{d}{dt}T_t \vert}_{t=s},  $
 
  \item[b)] $\dsize T_s= e^{s\pi H}= \sum_{k=0}^\infty \frac{(\pi s)^k}{k!} H^{(k)}. $
  \end{itemize}
\end{Cor}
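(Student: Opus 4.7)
The plan is to deduce both statements from the semigroup identity $T_s\circ T_t=T_{s+t}$ (Theorem \ref{T4- composition-TdTs}), the formula for the infinitesimal generator (Theorem \ref{T4-inf-gen-H}), and the boundedness of $H$ on $\ell^2$ (Schur's inequality gives $|||H|||_{\ell^2}=1$). Because $H$ is genuinely bounded, the operator exponential series converges in operator norm and all formal manipulations below can be justified cleanly.

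For part (a), fix $s\in\R$ and use Theorem \ref{T4- composition-TdTs} to write
\[
\frac{T_{s+h}-T_s}{h}\;=\;T_s\circ\frac{T_h-I}{h}\;=\;\frac{T_h-I}{h}\circ T_s.
\]
Applying both sides to an arbitrary $\vec a\in\ell^2$, Theorem \ref{T4-inf-gen-H} gives $\frac{T_h-I}{h}(\vec a)\to\pi H(\vec a)$ in $\ell^2$ as $h\to 0$; since $T_s$ is a bounded operator (an isometry by Theorem \ref{T4-norm-Tt}), I can pass it through the limit on the left, while on the right I apply Theorem \ref{T4-inf-gen-H} directly to $T_s(\vec a)$. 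Both computations yield the same limit, which is then identified with the derivative $\frac{d}{dt}T_t(\vec a)\vert_{t=s}$ by definition. This proves $T_sH=HT_s=\tfrac{1}{\pi}\frac{d}{dt}T_t\vert_{t=s}$.

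For part (b), since $|||H|||_{\ell^2}=1$ the series $S_s:=\sum_{k=0}^\infty \frac{(\pi s)^k}{k!}H^{(k)}$ converges in operator norm for every $s\in\R$ and satisfies $S_0=I$ together with $\frac{d}{ds}S_s=\pi H S_s=\pi S_s H$ (the derivative being taken in operator norm, justified by termwise differentiation of the norm-convergent series). To match $S_s$ with $T_s$, consider $\Phi(s):=T_{-s}\circ S_s$. Its derivative is
\[
\Phi'(s)\;=\;-\pi H T_{-s}\circ S_s+T_{-s}\circ\pi H S_s\;=\;0,
\]
where I used part (a) to differentiate $T_{-s}$ (as $T_{-s}$ commutes with $H$). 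Since $\Phi(0)=I$, it follows that $\Phi(s)=I$ for all $s$, and composing on the left with $T_s$ gives $T_s=S_s=e^{s\pi H}$.

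The only step requiring mild care is justifying that the $s\mapsto T_{-s}\circ S_s$ argument works in the operator-norm topology; this is immediate because $H$ is bounded, so the series and all its termwise derivatives converge in norm. No genuine obstacle arises — the corollary is essentially a repackaging of the main theorem once the generator has been identified.
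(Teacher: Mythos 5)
Your proof is correct. The paper itself offers no argument beyond the remark that the corollary ``easily follows'' from Theorems \ref{T4- composition-TdTs} and \ref{T4-inf-gen-H} together with the general identities \eqref{def-inf-gen} and \eqref{def-exp-inf-gen}; in effect it outsources part (b) to the expansion $U(t)(x)=\sum_n A^{(n)}(x)t^n/n!$, which as stated in Section 2.2 is not valid for arbitrary strongly continuous semigroups (for unbounded generators the series need not converge except on analytic vectors). Your argument supplies exactly the ingredient that makes the citation legitimate here: since $|||H|||_{\ell^2}=1$ by Schur's theorem, the series $S_s=\sum_k \frac{(\pi s)^k}{k!}H^{(k)}$ converges in operator norm, and the uniqueness argument via $\Phi(s)=T_{-s}\circ S_s$ pins down $T_s=e^{s\pi H}$ rigorously; part (a) is the standard two-sided difference-quotient computation and is what the paper intends. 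One small correction to your closing remark: you do not need (and at that stage cannot yet assert) norm-differentiability of $s\mapsto T_{-s}$. The product rule for $s\mapsto\Phi(s)(\vec a)$ only requires the strong differentiability of $T_{-s}$ furnished by part (a) together with the uniform bound $|||T_{-s-h}|||_{\ell^2}=1$, which lets you pass to the limit in
\begin{equation*}
T_{-s-h}\Bigl(\tfrac{1}{h}\bigl(S_{s+h}(\vec a)-S_s(\vec a)\bigr)\Bigr)\longrightarrow T_{-s}\bigl(\pi H S_s(\vec a)\bigr).
\end{equation*}
With that reading, $\frac{d}{ds}\Phi(s)(\vec a)=0$ for every $\vec a$, the $\ell^2$-valued function $\Phi(\cdot)(\vec a)$ is constant, and the conclusion follows as you say.
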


\medskip
\begin{Cor}\label{C1-evol-ODE}  $  U(t, \vec b)=T_t(\vec b)$ is 
the solution to   the initial value problem   
$$\begin{cases} \frac d{dt} U(t,\vec a)= \pi H(U(t,\vec a)) & U\in C^1(\R, \ell^2)
\\
U(0, \vec a)=\vec b.
\end{cases}
$$
  \end{Cor}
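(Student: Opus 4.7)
The plan is to verify the two conditions defining the initial value problem using the group structure already established.

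First, the initial condition is immediate: by the integer case of the definition \eqref{e1-defT}, $T_0(\vec b) = (-1)^0 \tau_0(\vec b) = \vec b$, so $U(0,\vec b) = \vec b$.

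Next, for the differential equation, I would fix $t \in \R$ and $\vec b \in \ell^2$ and study the difference quotient
\[
\frac{U(t+h,\vec b) - U(t,\vec b)}{h} = \frac{T_{t+h}(\vec b) - T_t(\vec b)}{h}.
\]
Using Theorem~\ref{T4- composition-TdTs}, rewrite $T_{t+h} = T_t \circ T_h$ so that
\[
\frac{T_{t+h}(\vec b) - T_t(\vec b)}{h} = T_t\!\left(\frac{T_h(\vec b) - \vec b}{h}\right).
\]
Since $T_t$ is an isometry of $\ell^2$ (Theorem~\ref{T4-norm-Tt}), in particular a bounded linear operator, I can pull the limit $h \to 0$ inside $T_t$. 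By Theorem~\ref{T4-inf-gen-H}, $\frac{T_h(\vec b) - \vec b}{h} \to \pi H(\vec b)$ in $\ell^2$, and by Corollary~\ref{C3-composition-TsH}(a) we have $T_t H = H T_t$, so the limit is $T_t(\pi H(\vec b)) = \pi H(T_t(\vec b))$. This gives $\frac{d}{dt} U(t,\vec b) = \pi H(U(t,\vec b))$.

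Finally, to confirm $U \in C^1(\R, \ell^2)$, I observe that the derivative $t \mapsto \pi H(T_t(\vec b))$ is continuous: $t \mapsto T_t(\vec b)$ is continuous by Theorem~\ref{T4-convergence}, and $H$ is bounded on $\ell^2$ (with norm $1$, by Schur's inequality discussed in Section~2.1). The main step — and the only one requiring any real work — is the commutation of the limit with $T_t$ in the difference quotient, which is handled cleanly by the semigroup identity together with the isometry property; no new estimates are needed beyond those already established.
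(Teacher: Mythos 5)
Your proof is correct and uses exactly the ingredients the paper cites: the paper gives no written argument for this corollary, merely asserting that it ``easily follows'' from Theorems \ref{T4- composition-TdTs} and \ref{T4-inf-gen-H} and the definition \eqref{def-inf-gen}, and your verification (group law to reduce the difference quotient at $t$ to the one at $0$, boundedness of $T_t$ to pass the limit inside, Theorem \ref{T4-inf-gen-H} for the limit, Corollary \ref{C3-composition-TsH}(a) for the commutation, and Theorem \ref{T4-convergence} plus Schur's bound for the $C^1$ claim) is precisely the intended route. The one point left implicit both by you and by the paper is uniqueness, which the word ``the'' in the statement demands; it follows from the standard observation that for any $C^1$ solution $V$ the map $s\mapsto T_{t-s}(V(s))$ has vanishing derivative, hence $V(t)=T_t(\vec b)$.
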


Corollary \ref{C1-evol-ODE} may have application to signal processing.
 The following result is a consequence of the Hille-Yosida theorem.

\begin{Cor}\label{C4-Hille-Yos-H}
For every $\lambda >0$, the operator $\pi H-\lambda I $ is invertible in $\ell^2$, and 
  $|| (\pi H-\lambda I)^{-1}(\vec a)||_{\ell^2}<\frac {||\vec a||_{\ell^2}}\lambda $.  
\end{Cor}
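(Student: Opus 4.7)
The plan is to invoke the Hille--Yosida theorem (Theorem~\ref{T2-Hille-Y}) directly with $A=\pi H$, exactly as the paragraph before the corollary anticipates. By Theorem~\ref{T1-semigr-H}, the restriction $\{T_t\}_{t\ge 0}$ of the group to non-negative parameters is a strongly continuous semigroup on $\ell^2$; by Theorem~\ref{T4-norm-Tt} each $T_t$ is an isometry, so this semigroup is in particular contractive in the sense required by Theorem~\ref{T2-Hille-Y}. Its infinitesimal generator is $\pi H$ by Theorem~\ref{T4-inf-gen-H}, and the domain $D(\pi H)$ equals all of $\ell^2$: the defining limit \eqref{def-inf-gen} exists for every $\vec a\in\ell^2$, in agreement with the fact (Shur's inequality, Section~2.1) that $H$ is bounded on $\ell^2$. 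Hence $D(\pi H)$ is trivially dense.

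With the hypotheses of Theorem~\ref{T2-Hille-Y} verified, its ``only if'' direction yields the two conclusions of the corollary at once: conditions (i) and (ii), specialized to $A=\pi H$, are precisely the invertibility of $\pi H-\lambda I$ in $\ell^2$ for every $\lambda>0$ together with the bound $||(\pi H-\lambda I)^{-1}(\vec a)||_{\ell^2}<||\vec a||_{\ell^2}/\lambda$ for every $\vec a\in\ell^2$.

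There is no real obstacle; the corollary is simply a repackaging of Theorem~\ref{T1-semigr-H} through the Hille--Yosida machinery. If one prefers an explicit computation in place of the abstract appeal, the resolvent admits the Laplace transform representation
\[
  (\pi H-\lambda I)^{-1}(\vec a)= -\int_0^\infty e^{-\lambda t}\, T_t(\vec a)\, dt, \qquad \lambda>0,
\]
and the desired norm bound then follows at once from $||T_t(\vec a)||_{\ell^2}=||\vec a||_{\ell^2}$ and the triangle inequality for vector-valued integrals, giving $||(\pi H-\lambda I)^{-1}(\vec a)||_{\ell^2}\le ||\vec a||_{\ell^2}/\lambda$.
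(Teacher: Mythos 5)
Your proposal is correct and follows exactly the route the paper intends: the corollary is stated there as an immediate consequence of the Hille--Yosida theorem applied to the contractive (indeed isometric) semigroup $\{T_t\}_{t\ge 0}$ with generator $\pi H$, which is all of Theorem~\ref{T1-semigr-H}. Your additional Laplace-transform representation of the resolvent is a nice explicit supplement, and your remark that it only yields $\le$ rather than the strict $<$ is apt --- the strict inequality comes from the paper's (slightly nonstandard) formulation of condition (ii) in Theorem~\ref{T2-Hille-Y}.
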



\end{document}